\newtheorem{theorem}{Theorem}[section]
\newtheorem{proposition}[theorem]{Proposition}
\newtheorem{corollary}[theorem]{Corollary}
\newtheorem{lemma}[theorem]{Lemma}
\theoremstyle{definition}
\newtheorem{definition}[theorem]{Definition}
\theoremstyle{remark}
\newtheorem{remark}[theorem]{Remark}
\numberwithin{equation}{section}
\def\CC{\mathbb{C}}
\def\NN{\mathbb{N}}
\def\RR{\mathbb{R}}
\def\ZZ{\mathbb{Z}}
\def\RP{\mathbb{RP}}
\def\TT{\mathbb{T}}
\def\MM{{\mathbb{M}}}
\renewcommand\SS{\mathbb{S}}
\newcommand\minus\backslash
\newcommand\lan\langle
\newcommand\ran\rangle
\DeclareMathOperator\Div{div}
\renewcommand\leq\leqslant
\renewcommand\geq\geqslant
\newlength{\intwidth}
\newcommand{\vect}{\mathfrak{X}}
\begin{document}

\title[Chaos in the Euler equation]{Chaos in the incompressible Euler equation on manifolds of high dimension}

\author{Francisco Torres de Lizaur}
\address{Department of Mathematics, University of Toronto, Toronto, ON M5S 2E4, Canada}
\email{ftlizaur@math.toronto.edu}

\begin{abstract}

We construct finite dimensional families of non-steady solutions to the Euler equations, existing for all time, and exhibiting all kinds of qualitative dynamics in the phase space, for example: strange attractors and chaos, invariant manifolds of arbitrary topology, and quasiperiodic invariant tori of any dimension. 

The main theorem of the paper, from which these families of solutions are obtained, states that for any given vector field $X$ on a closed manifold $N$, there is a Riemannian manifold $M$ on which the following holds: $N$ is diffeomorphic to a finite dimensional manifold in the space of divergence-free vector fields on $M$ that is invariant under the Euler evolution, and on which the Euler equation reduces to a finite dimensional ODE that is given by an arbitrarily small perturbation of the vector field $X$ on $N$.  

\end{abstract}

\maketitle

\section{Introduction}
The motion of an incompressible inviscid fluid in a Riemannian manifold $(M, g)$ is described by a time-dependent vector field $u_t$ satisfying the \emph{Euler equations}

\begin{equation}\label{euler}
\partial_t u_t+\nabla^{LC}_{u_t} u_t= -\nabla p ,\,\,\,\,\, \Div u_t=0 \,.
\end{equation}

Here $\Div$, $\nabla$, and $\nabla^{LC}$ denote the divergence, gradient and covariant derivative on $M$, defined with the metric $g$; and $p$ is a (time-dependent) function called the pressure, also an unknown in the equations.

These equations define a dynamical system of an infinite number of degrees of freedom: they can be interpreted as a first order ODE (which we will call the Euler system) in the infinite-dimensional linear space $\vect^{m}(M)$ of $m$-times differentiable divergence-free vector fields on the domain $M$. 

We will denote by $\Phi_t(v):=u_t$ the flow of this ODE starting at the field $v \in \vect^{m}$. Small time existence and uniqueness hold for $M$ closed (compact without boundary) and non-integer $m>1$ \cite{EB}; moreover, the solution $u_t$ is $C^{1}$ on the variable $t$.

Global in time existence of solutions, however, is only known in 2 dimensions \cite{Wo}; in higher dimensions, whether or not there are initial conditions for which the flow $\Phi_t$ ``blows-up'' in finite time (i.e $||\Phi_t(v)||_{C^{m}(M)} \rightarrow \infty$ as $t \rightarrow T < \infty$)  is a well-known open problem.

As with dynamical systems of a finite number of degrees of freedom, besides existence and uniqueness of solutions we would like to understand the qualitative properties of the flow $\Phi_t$, when it exists. Examples of these properties are the number of equilibrium points and their stability, the periodicity and almost periodicity of trajectories, and the geometry and dynamics of more complex invariant subsets  (other qualitative properties of the Euler flow $\Phi_t$ for which some results are known are mixing \cite{KKPS1, KKPS2} and wandering of solutions \cite{Na, Sn}, but our results will imply nothing about these). 

So far, the only invariant sets whose existence has been established are the simplest: stationary solutions (i.e, zeros), heteroclinic and homoclinic trajectories between zeroes, periodic orbits, and quasiperiodic invariant tori \cite{CF, Taoquadratic}; in these invariant sets, of course, the flow $\Phi_t$ is well defined for all times.  

Observe that, in the finite-dimensional invariant manifolds of $\Phi_t$, the evolution of the fluid velocity in time can be completely described by the evolution of a finite set of parameters; in other words, the Euler equation reduces to a finite dimensional first order ODE on the invariant manifold. The previous paragraph implies that the Euler equation is only known to reduce to the simplest finite dimensional ODEs, conjugate to linear periodic or quasiperiodic flows on tori. 

The goal of this paper is to show that, in fact, almost any finite dimensional smooth dynamics can be found in the phase space of the Euler system, in some Riemannian manifold. More precisely, our main result states:

\begin{theorem}\label{main}
Let $N$ be any closed (compact without boundary) manifold. Given any $C^{\infty}$ vector field $X$ on $N$, and two positive numbers $\epsilon$ and $m$, there is 
\begin{enumerate}

\item a $C^{\infty}$ vector field $Y$ in $N$ satisfying 
\[
||X-Y||_{C^{m}(N)} \leq \epsilon \,,
\]

\item a compact Riemannian manifold $M$, together with a finite-dimensional linear subspace $E \subset \vect^{\infty}(M)$

\item a $C^{\infty}$ embedding $\Theta: N \rightarrow E$

\end{enumerate}

such that for any point $p \in N$, we have that the time-dependent divergence-free field $u_t=\Theta(\phi^t_{Y}(p))$ (here $\phi^t_{Y}$ stands for the flow of $Y$) is the solution to the Euler equation \eqref{euler} on $M$ with initial velocity $u_0=\Theta(p)$.

\end{theorem}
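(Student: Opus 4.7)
The plan I would pursue has three stages: a Galerkin-type reduction of the Euler flow to a quadratic ODE on a finite-dimensional invariant subspace $E\subset\vect^{\infty}(M)$; a reduction of the target dynamics on $N$ to such a quadratic ODE via polynomial approximation and quadratization; and an explicit construction of $M$ together with a basis of $E$ realizing the required structure constants.

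\emph{Galerkin reduction.} Suppose $E\subset \vect^{\infty}(M)$ is a finite-dimensional subspace of divergence-free fields such that the $L^{2}$-orthogonal projection of the Euler nonlinearity $B(v,v):=-\cP(\nabla^{LC}_{v}v)$ onto $E^{\perp}$ vanishes for every $v\in E$, where $\cP$ is the Leray projector onto divergence-free fields. Then $E$ is invariant under the Euler flow, conservation of $\|u\|_{L^{2}}^{2}$ makes the restricted flow globally defined, and in any basis $\{v_{1},\dots,v_{K}\}$ of $E$ the reduced equation takes the quadratic form $\dot c_{i}=\sum_{j,k}B^{i}_{jk}c_{j}c_{k}$ with $B^{i}_{jk}=\langle B(v_{j},v_{k}),v_{i}\rangle_{L^{2}}$. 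The task thus becomes to produce $M$, a basis of $E$, and an embedding $\Theta:N\hookrightarrow E$ such that this quadratic ODE, pulled back by $\Theta$, agrees with an arbitrarily small perturbation $Y$ of $X$.

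\emph{Reduction to a quadratic target.} Whitney's embedding theorem places $N$ smoothly in some $\RR^{K_{0}}$ and allows $X$ to be extended to a smooth field tangent to $N$. Weierstrass approximation on a ball containing $N$ gives a polynomial vector field $C^{m}(N)$-close to this extension. I would then apply the classical \emph{quadratization} trick: by introducing auxiliary coordinates $y_{\alpha}=x^{\alpha}$ for the monomials appearing in the polynomial, the dynamics becomes the restriction of an explicit quadratic vector field $Q$ on some $\RR^{K}$ to the graph of the monomials, which is a compact submanifold diffeomorphic to $N$. A further small perturbation and a radial rescaling allow $\Theta(N)$ to sit on the unit sphere of $\RR^{K}$ with $Q$ tangent to it, matching the sphere of constant $L^{2}$-energy on which the Euler flow is confined.

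\emph{Construction of $M$ and the basis, and the main obstacle.} The last step is to realize the prescribed quadratic vector field $Q$ as the Galerkin truncation of Euler on some $(M,g)$, with a chosen orthonormal basis $\{v_{1},\dots,v_{K}\}$ of $E$. My approach would be to take $M$ of very high dimension and to place the basis fields in essentially disjoint coordinate patches, or with nearly disjoint frequency supports (localized Beltrami-like building blocks), so that each trilinear coefficient $\langle B(v_{j},v_{k}),v_{i}\rangle$ can be computed and prescribed essentially independently of the others, in the spirit of engineering structure constants out of well-separated generators. \textbf{The principal obstacle} is enforcing the exact invariance $B(E,E)\subset E$: every residual interaction leaking into $E^{\perp}$ must be cancelled, which typically demands either an engineered symmetry of $M$ or a small corrective deformation of the metric $g$ obtained by an implicit-function or fixed-point argument. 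This rigid closure condition, rather than the quadratization or the dimension count, is where I expect the heart of the proof to lie.
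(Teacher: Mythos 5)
Your overall architecture (reduce the target dynamics to a homogeneous quadratic ODE preserving a quadratic form, then realize that ODE inside the Euler flow on a high-dimensional manifold) is the same as the paper's, but there are two genuine gaps. The first and most important is in your middle stage: after you replace the extension of $X$ by a Weierstrass polynomial approximant, that polynomial field is no longer tangent to the embedded copy of $N$, so $N$ is not invariant under it, and neither is the graph-of-monomials submanifold under your quadratized field $Q$. The theorem demands an \emph{exactly} invariant $\Theta(N)$ on which the Euler flow is conjugate to the flow of $Y$; approximate tangency gives you nothing. This is precisely why the statement allows a perturbation $Y$ of $X$, and the paper's proof supplies the missing mechanism: it embeds $N$ in a sphere $\SS^{n}$, extends $X$ to a field $Z$ on $\SS^{n}$ that contracts sharply in the normal directions so that $F(N)$ is $r$-normally hyperbolic, and then invokes the Hirsch--Pugh--Shub structural stability theorem to conclude that every sufficiently $C^{r}$-close polynomial field $Z'$ still possesses a nearby invariant copy of $N$ carrying dynamics $C^{r-1}$-close to $X$. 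Without normal hyperbolicity (or some substitute persistence argument) your construction does not produce an invariant manifold at all.

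The second gap is your final stage. You correctly identify that forcing the closure condition $B(E,E)\subset E$ is the hard part, but you leave it as an acknowledged obstacle with a proposed implicit-function/fixed-point strategy; the paper instead quotes Tao's universality theorem, which realizes \emph{any} quadratic ODE $\dot y_{i}=\tilde B_{ijk}y_{j}y_{k}$ with $\sum_{i,j,k}\tilde B_{ijk}y_{i}y_{j}y_{k}=0$ inside the Euler flow on $SO(d)\times\TT^{d}$ with an explicit metric. Note also that the hypothesis needed is that the cubic form vanishes \emph{identically on $\RR^{d}$}, i.e.\ the full ODE conserves $|y|^{2}$, not merely that $Q$ is tangent to the unit sphere along $\Theta(N)$ as your ``radial rescaling'' would give. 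The paper obtains this global identity structurally: the embedding is built from Fourier modes (resp.\ spherical harmonics), and differentiating an $L^{2}$-orthonormal eigenbasis along Killing fields produces coefficients with the antisymmetry $B_{ijk}=-B_{kji}$, which forces the cubic form to vanish. A generic quadratization of an arbitrary polynomial field will not have this property, so this step needs the specific choice of coordinates, not just ``auxiliary coordinates for the monomials.''
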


\begin{remark}
The embedding $\Theta$ in Theorem  \ref{main} is $C^{\infty}$ for any $C^{k}$ ($k\in [0, \infty]$) topology in the space $\vect^{\infty}(M)$ of smooth divergence-free vector fields on $M$. In what follows, by smooth we will always mean $C^{\infty}$.
\end{remark}

We will deduce Theorem \ref{main} from Theorem \ref{main2} below, whose proof will constitute the core of the paper:

\begin{theorem}\label{main2}
Let $\MM$ denote the $n$-sphere $\SS^{n}$ or the $n$-dimensional torus $\TT^n=(\RR/\ZZ)^n$, for any $n\geq 1$. Let $X$ be any vector field on $\MM$ that can be expressed as a trigonometric polynomial (in the case of $\TT^n)$ or as a polynomial vector field in $\RR^{n+1}$ tangent to $\SS^n$. Then, there is:
\begin{enumerate}

\item a compact Riemannian manifold $M$, together with a finite-dimensional linear subspace $E \subset \vect^{\infty}(M)$

\item a smooth embedding $\Phi: \MM \rightarrow E$

\end{enumerate}

such that for any point $x \in \MM$, the time-dependent divergence-free field $u_t=\Phi(\phi^t_{X}(x))$ is the solution to the Euler equation \eqref{euler} on $M$ with initial condition $u_0=\Phi(x)$.

\end{theorem}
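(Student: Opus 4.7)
The plan is to exploit that Euler is a \emph{quadratic} evolution: writing \eqref{euler} as $\pd_t u = \cB(u,u)$ with symmetric bilinear form $\cB(u,v) := -\tfrac12 P\bigl(\nabla^{LC}_u v + \nabla^{LC}_v u\bigr)$ and $P$ the Leray projector onto divergence-free fields, any finite-dimensional subspace $E\subset\vect^\infty(M)$ closed under $\cB$ inherits a quadratic ODE. I want to choose $M$, $E$, and an embedding $\Phi:\MM\to E$ so that this restricted ODE is conjugate via $\Phi$ to the flow of $X$.

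The first step is a Carleman-type linearization of $X$. For $X$ polynomial on $\SS^n\subset\RR^{n+1}$ (or trigonometric on $\TT^n$, which becomes algebraic in characters $e^{2\pi i \bk\cdot x}$), adjoin new coordinates $y_\al = x^\al$ ranging over all monomials up to a sufficiently high degree $D$. Differentiating along $X$, each $\dot y_\al$ is polynomial in $(y_\be)$, and provided $D$ is large enough in terms of $\deg X$ one can arrange every $\dot y_\al$ to be a purely \emph{bilinear} expression in $(y_\be)$. This produces a smooth embedding $\iota:\MM\hookrightarrow\RR^N$ and a quadratic vector field $Q$ on $\RR^N$ with $\iota(\MM)$ $Q$-invariant and $Q|_{\iota(\MM)} = \iota_\ast X$.

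The core task is then to realize $Q$ as $\cB$ restricted to some $N$-dimensional $E$: find $M$ and linearly independent $v_1,\dots,v_N \in \vect^\infty(M)$ whose structure constants $\cB^k_{ij}$, defined by $\cB(v_i,v_j) = \sum_k \cB^k_{ij}\, v_k$, coincide with the coefficients of $Q$ in the basis $(y_\al)$. Once this is achieved, $\Phi(x):=\sum_i \iota_i(x)\,v_i$ satisfies $d\Phi_x(X(x)) = \cB(\Phi(x),\Phi(x))$ by construction, and injectivity of $\iota$ together with linear independence of the $v_i$ makes $\Phi$ a smooth embedding. A flexible template for producing the $v_i$'s is to take $M$ to be a product manifold of large dimension (e.g. $\TT^D$ or a product of round spheres) equipped with a tunable metric, and choose the $v_i$'s to be Beltrami-type eigenfields indexed by Fourier or spherical-harmonic modes, for which $\cB$ admits an explicit mode-wise expression. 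The many free parameters (modes, amplitudes, metric coefficients) should suffice to match the finitely many $\cB^k_{ij}$ demanded by $Q$.

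The main obstacle is that $\cB$ is not an arbitrary bilinear map: energy conservation forces $\lan \cB(u,u),u\ran_{L^2}=0$, $P$ annihilates gradients, and $\nabla^{LC}$ is constrained by the curvature of $M$, so the achievable structure constants lie in a proper subvariety of the symmetric tensors. To reconcile this with $Q$ I would enlarge the Carleman embedding so that $\iota(\MM)$ lies on a quadric preserved by $Q$ (automatic for $\SS^n$; for $\TT^n$ obtained by including a norm-preserving family of characters), making $Q$ compatible with an $L^2$-type conservation law on the target. The matching problem then reduces to realizing $Q$ within the $\cB$-admissible subvariety by exploiting the abundance of Beltrami modes as $\dim M$ grows. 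Carrying out this last step explicitly — producing $M$ and $v_i$'s whose structure constants reproduce $Q$ while respecting all constraints imposed by $P$, $\nabla^{LC}$, and the divergence-free condition — is where I expect the bulk of the technical effort to lie.
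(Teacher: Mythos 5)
Your overall architecture coincides with the paper's: first conjugate the flow of $X$ to a homogeneous quadratic ODE on $\RR^N$ via an embedding by finitely many functions (the paper uses paired sines and cosines on $\TT^n$ and an orthonormal basis of spherical harmonics on $\SS^n$, which is your Carleman linearization in slightly different clothing), and then realize that quadratic ODE inside the Euler equation on some $M$. The problem is that your second step — finding $M$ and fields $v_1,\dots,v_N$ whose structure constants under the Euler bilinear form reproduce $Q$ — is not a technical detail to be filled in but the entire content of a theorem of Tao (\cite{Taoquadratic}), which the paper simply invokes: any quadratic ODE $\dot y_i=\tilde B_{ijk}y_jy_k$ with $\tilde B_{ijk}=\tilde B_{ikj}$ and $\sum_{ijk}\tilde B_{ijk}y_iy_jy_k\equiv 0$ embeds linearly into Euler on $SO(N)\times\TT^N$ with a suitable metric. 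Your sketch (Beltrami modes on a product manifold with a tunable metric, enough free parameters to match finitely many structure constants) is a plausibility argument for that theorem, not a proof of it; you correctly identify it as ``where the bulk of the technical effort lies'' and then stop there. As written, the proposal reduces Theorem \ref{main2} to an unproved universality statement for quadratic ODEs.

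A second, more localized gap: the hypothesis you need is that the cubic form $\sum_{ijk}B_{ijk}y_iy_jy_k$ vanishes \emph{identically on $\RR^N$} (equivalently, every sphere is invariant), not merely that $\iota(\MM)$ sits on one invariant quadric. A generic Carleman closure of a polynomial vector field will not satisfy this, and ``including a norm-preserving family of characters'' does not obviously produce it. The paper arranges it by a specific antisymmetry $B_{ijk}=-B_{kji}$ of the coefficients, which falls out of the choice of coordinates: on the torus, the $(\sin,\cos)$ pair of each frequency satisfies $q_k\dot q_k+p_k\dot p_k=0$ mode by mode; on the sphere, the derivative of a spherical harmonic along the rotation generators $h_\mu$ has an antisymmetric matrix of coefficients because the $h_\mu$ are Killing (divergence-free), as seen by integrating by parts. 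You would need to build this structure into your linearization explicitly before the embedding-into-Euler step can even be posed in the form Tao's theorem (or your hand-built substitute for it) requires.
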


\begin{remark}
The dimension of the manifold $M$ in Theorem \ref{main2} can be computed exactly and depends only on $n$ and on the degree of the vector field $X$; we refer to Subsection \ref{dimension2} for more details.  In the case of Theorem \ref{main}, the dependence of the dimension of $M$ is more complex, we discuss it and find an upper bound in Subsection \ref{dimension}.
\end{remark}

Let us present rough sketches of the proofs. 

To prove Theorem \ref{main2}, the key is to construct a homogeneous quadratic ODE in $\RR^{d}$, for $d$ big enough, having an invariant manifold diffeomorphic to $\MM$ where the flow of the ODE is conjugate to the flow of the vector field $X$. This ODE, moreover, will be shown to preserve the standard euclidean inner product (in other words, its trajectories are always tangent to spheres), so we can embed it into the Euler equations on the manifold $SO(d)\times \TT^{d}$ with a certain metric, applying a Theorem of Tao \cite{Taoquadratic}.

To prove Theorem  \ref{main}, we embed the manifold $N$ in a sphere $\SS^{n}$ of high enough dimension, then extend the vector field $X$ on $N$ to $ \SS^n$ in a way that guarantees structural stability of $N$ under sufficiently small perturbations of the extension of $X$. This ensured, we approximate the extension of $X$ by a polynomial vector field, and apply Theorem \ref{main2}.

\begin{remark}
The results above can be interpreted as a universality result for the (time-dependent) Euler equation. Let us remark that this is different from the recent universality result obtained for steady Euler in \cite{CMDP}. There it is shown, using techniques from contact geometry, that any vector field on a compact manifold can be embedded, without perturbation, into a stationary solution of the Euler equations on a Riemannian manifold of higher dimension.
\end{remark}

\subsection{Applications of Theorems \ref{main} and \ref{main2}}

The results above can be used to construct solutions to the Euler equations with dynamical structures that were previously unknown, or at best conjectural. In particular, there are finite dimensional invariant submanifolds in the phase space of the Euler equation where the evolution is chaotic:

\begin{theorem}\label{chaos}

There is a Riemannian manifold $(M, g)$ of dimension $21$, for which the Euler dynamical system is chaotic. More precisely, there is a 3-dimensional torus $\Sigma$ inside $\vect^{\infty}(M, g)$ such that
\begin{enumerate}
\item the solutions to the Euler equation with initial condition in $\Sigma$ exists for all time, and remain in $\Sigma$ (i.e, $\Sigma$ is an invariant torus of the Euler dynamical system)
\item the Euler dynamical system on $\Sigma$ has a compact invariant set where the dynamic is chaotic (presence of transverse homoclinic intersections, horseshoes, and positive topological entropy); as well as other regions foliated by 2-dimensional invariant tori.
\end{enumerate}

\end{theorem}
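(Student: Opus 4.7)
The plan is to invoke Theorem \ref{main2} with $\MM = \TT^3$ and with $X$ a specific trigonometric polynomial vector field of degree one on $\TT^3$ that is itself chaotic in the requisite sense. Concretely, I will exhibit an $X$ whose flow on $\TT^3$ already has (a) a transverse homoclinic intersection to a hyperbolic periodic orbit, yielding by the Smale--Birkhoff theorem a hyperbolic horseshoe, transverse homoclinic intersections, and positive topological entropy; and (b) a positive-measure Cantor family of invariant $2$-tori on which the dynamics is quasiperiodic. Theorem \ref{main2} then provides a smooth embedding $\Phi\colon\TT^3\to\vect^\infty(M,g)$ that conjugates the flow of $X$ on $\TT^3$ to the Euler flow restricted to $\Sigma:=\Phi(\TT^3)$. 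Since $\TT^3$ is compact, this immediately gives invariance of the $3$-torus $\Sigma$ under the Euler flow, global existence of solutions starting on $\Sigma$, and transfer of every dynamical feature of $X$, including the horseshoe and the family of $2$-tori.

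To construct $X$, I would start from an integrable volume-preserving trigonometric polynomial vector field on $\TT^3$ of pendulum-rotor type, for instance $X_0 = \sin y\,\pa_x - \sin x\,\pa_y + \omega\,\pa_z$ with $\omega$ a fixed Diophantine frequency. The pendulum factor is Hamiltonian with Hamiltonian $H = -\cos x - \cos y$, has hyperbolic saddles joined by heteroclinic orbits, and fills its elliptic region with a one-parameter family of periodic orbits whose frequency varies non-trivially with $H$. Taking the product with the rotor produces, inside $\TT^3$, hyperbolic periodic orbits joined by a heteroclinic cycle, together with a non-degenerate one-parameter family of invariant $2$-tori. I then perturb to $X := X_0 + \epsilon\sin z\,\pa_y$, still a volume-preserving degree-one trigonometric polynomial. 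A standard Melnikov calculation along the unperturbed heteroclinic orbits yields an integral whose simple zeros (generically in $\omega$) force a transverse splitting of the stable and unstable manifolds of the hyperbolic periodic orbits for all sufficiently small $\epsilon \ne 0$, providing the horseshoe and its positive topological entropy via Smale--Birkhoff. Simultaneously, volume-preserving KAM theory on $\TT^3$, applied to the non-degenerate family of unperturbed $2$-tori, preserves a Cantor set of $2$-tori of positive Lebesgue measure for the same small $\epsilon$.

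With such $X$ in hand I apply Theorem \ref{main2}. Under the natural embedding $\TT^3\hookrightarrow\RR^6$ given by $(x,y,z)\mapsto(\cos x,\sin x,\cos y,\sin y,\cos z,\sin z)$, each trigonometric monomial $\cos\theta_i$, $\sin\theta_i$ becomes a linear coordinate on $\RR^6$, so $X$ lifts to a homogeneous quadratic vector field on $\RR^6$ preserving each of the quadratic forms $a_i^2+b_i^2$ and hence the Euclidean norm. The dimension formula of Subsection \ref{dimension2} then yields $M = SO(6)\times\TT^6$ of dimension $\binom{6}{2}+6 = 21$, together with the conjugating embedding $\Phi$. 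The horseshoe, the transverse homoclinic orbits, the positive topological entropy, and the Cantor family of invariant $2$-tori of $X$ on $\TT^3$ are transferred verbatim to $\Sigma = \Phi(\TT^3)\subset\vect^\infty(M,g)$, which is what the theorem asserts.

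The hard part is the simultaneous verification of the two perturbative pieces for the same small $\epsilon$: Melnikov non-degeneracy (the integral $\int_{-\infty}^\infty\{H,\sin z\}\bigl(\gamma_0(t-t_0),\omega(t-t_0)\bigr)\,dt$ along each unperturbed heteroclinic orbit $\gamma_0$ must have simple zeros, which I expect to follow from an explicit computation after choosing $\omega$ generic), and volume-preserving KAM on $\TT^3$ (non-degeneracy of the unperturbed frequency map plus a Diophantine selection on the energy parameter, so that a positive-measure set of surviving tori remains). Both are classical results in perturbation theory, but they must be orchestrated on a single trigonometric polynomial $X$ of degree one so that the dimension count of $M$ comes out to exactly $21$; everything after that is a direct application of Theorem \ref{main2}.
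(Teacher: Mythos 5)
Your overall strategy is the paper's: take a degree-one trigonometric polynomial vector field on $\TT^3$ whose flow already exhibits a horseshoe and a family of invariant $2$-tori, push it through Theorem \ref{main2}, and read off $M=SO(6)\times\TT^6$ of dimension $21$. The paper, however, does not build its own example: it takes the ABC flows $u_{ABC}$, for which transverse homoclinic intersections, horseshoes and KAM tori are already established in the literature (Chicone, Ziglin), so the entire dynamical verification is outsourced and the proof is two lines. You instead propose a pendulum--rotor field and defer the Melnikov and volume-preserving KAM verifications; you are candid that these are "the hard part," but as written they are claims, not proofs, so the chaotic behaviour of your $X$ is not actually established.

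There is also a concrete obstruction to your dimension count. Your field $X_0=\sin y\,\pa_x-\sin x\,\pa_y+\omega\,\pa_z$ has a nonzero constant Fourier mode ($\omega\,\pa_z$), so Remark \ref{harmonics} does \emph{not} apply: the zero frequency cannot be dropped from the embedding $\Psi$. The reason is structural, not cosmetic. In the construction of Proposition \ref{aux1}, the $k'=0$ component of $X$ contributes terms of the form $b_0\cdot k\,\cos(2\pi k\cdot x)$ to $d_x\Psi(X)$, which are \emph{linear} in the coordinate functions $\{\sin(2\pi k\cdot x),\cos(2\pi k\cdot x)\}_{k\neq 0}$; to render the ODE homogeneous quadratic (as Tao's Theorem \ref{tao} requires) one must adjoin the constant function as an extra coordinate, forcing $d\geq 7$ and hence $\dim M\geq\dim SO(7)+7=28$. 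The theorem asserts dimension exactly $21$, so your example as constructed does not prove the stated result. The rotor term is essential to your Melnikov mechanism, so this is not easily patched within your scheme; the ABC fields avoid the problem because every component is a combination of sines and cosines of nonzero frequencies, leaving exactly the six coordinates $\sin x_i,\cos x_i$ and $d=6$.
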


We will prove this Theorem in Section \ref{s4}, as an application of Theorem  \ref{main2}.

We can also use Theorem \ref{main} to prove that some submanifolds of phase space are filled with strange attractors of hyperbolic type (for example, Smale-Williams solenoids). 
Indeed, let $X$ be a vector field on some manifold $N$ having a hyperbolic strange attractor $A$. This means that any sufficiently small $C^m$ perturbation of $X$ also has a hyperbolic strange attractors $A'$ close to $A$, on which the flow is topologically conjugate to the flow of $X$ in $A$. Thus we have

\begin{corollary}[Hyperbolic strange attractor]
On the space of smooth divergence-free vector fields on some Riemannian manifolds $M$, we can find invariant finite-dimensional subsets on which the Euler equation has solutions for all time that converge to an hyperbolic strange attractor. 
\end{corollary}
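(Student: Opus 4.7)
The plan is to find a closed manifold $N$ carrying a smooth vector field $X$ whose flow has a hyperbolic strange attractor $A\subset N$, and then transfer this structure into the phase space of the Euler equation by invoking Theorem \ref{main}. Concrete choices of $(N,X,A)$ are supplied by classical constructions of hyperbolic attractors for flows on closed manifolds: one may embed a solid torus carrying a Smale--Williams solenoid attracting map into a closed manifold on which the map extends to a diffeomorphism, and then suspend; or one may use a suspension carrying a Plykin attractor. Any such triple $(N, X, A)$ will do, and we fix one at the outset.

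First I would fix $m\geq 1$ and choose $\epsilon>0$ small enough that every $C^m$-perturbation $Y$ of $X$ of size at most $\epsilon$ has a hyperbolic attractor $A'\subset N$ close to $A$ and topologically conjugate to $(A,\phi^t_X|_A)$; this is guaranteed by the classical structural stability theorem for hyperbolic attractors of flows. Applying Theorem \ref{main} with these choices, I obtain a closed Riemannian manifold $M$, a finite-dimensional linear subspace $E\subset \vect^{\infty}(M)$, a smooth embedding $\Theta:N\to E$, and a perturbed field $Y$ on $N$ with $\|X-Y\|_{C^m(N)}\leq \epsilon$, such that $\Theta$ conjugates $\phi^t_Y$ on $N$ to the Euler flow $\Phi_t$ restricted to the finite-dimensional invariant submanifold $\Theta(N)\subset\vect^{\infty}(M)$.

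By our choice of $\epsilon$, the flow $\phi^t_Y$ carries a hyperbolic strange attractor $A'\subset N$ conjugate to $A$. Since $\Theta$ is a smooth embedding that intertwines $\phi^t_Y$ with $\Phi_t$ on $\Theta(N)$, the image $\Theta(A')$ is a compact invariant set of the Euler dynamical system on $M$ on which the evolution is topologically conjugate to the hyperbolic dynamics of $X$ on $A$; in particular it inherits every feature of a hyperbolic strange attractor, such as the Smale--Williams solenoid structure. The image under $\Theta$ of a trapping neighborhood of $A'$ in $N$ is a trapping neighborhood of $\Theta(A')$ inside the invariant submanifold $\Theta(N)$, so any Euler solution starting there exists for all time (it remains inside $\Theta(N)$, where the evolution reduces to the complete flow of $Y$ on the closed manifold $N$) and converges to $\Theta(A')$, yielding the corollary.

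The only genuine technical point is to ensure that the regularity in which structural stability of hyperbolic attractors holds is compatible with the topology in which Theorem \ref{main} approximates $X$. Since hyperbolic attractors of smooth flows on closed manifolds are already $C^1$-structurally stable, the choice $m\geq 1$ in Theorem \ref{main}, together with an $\epsilon$ small in terms of the constants of the hyperbolic splitting on $A$, is enough to carry the argument through; no further obstacle arises beyond the selection of the base example $(N,X,A)$.
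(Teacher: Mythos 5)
Your proposal is correct and follows essentially the same route as the paper: the paper's (very brief) justification is precisely that hyperbolic strange attractors persist under small $C^m$ perturbations, so the perturbed field $Y$ produced by Theorem \ref{main} still carries one, and the embedding $\Theta$ transports it into an invariant finite-dimensional subset of the Euler phase space. Your additional details (concrete choices of $(N,X,A)$, the trapping neighbourhood, and the compatibility of the $C^1$ structural stability with the $C^m$ approximation) are all consistent with, and merely flesh out, the paper's argument.
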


\begin{remark}
The Lorenz attractor is not hyperbolic, so the previous corollary does not apply to it. However, there is a sense in which we can still find it in the Euler system. We refer to Section \ref{sf}. 
\end{remark}

Another consequence is that we can find finite dimensional families of trajectories that are Anosov, since Anosov flows are stable under small $C^{1}$ perturbations:

\begin{corollary}[Anosov flows]
On the space of smooth divergence-free vector fields on some Riemannian manifolds $M$, we can find finite-dimensional manifolds on which the Euler equation has solutions for all time and the dynamics is Anosov.
\end{corollary}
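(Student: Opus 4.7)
The plan is to mimic the argument for hyperbolic strange attractors, with the $C^1$-structural stability of Anosov flows playing the role previously played by structural stability of hyperbolic attractors.

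First, I would fix a closed manifold $N$ carrying a smooth Anosov flow: for instance, the unit tangent bundle of a compact hyperbolic surface with its geodesic flow, or the suspension of an Anosov diffeomorphism of $\TT^2$. Let $X \in \vect^{\infty}(N)$ denote the generating vector field, and recall that being Anosov is a $C^1$-open property of vector fields: there exists $\de>0$ such that every smooth $Y$ with $\|X-Y\|_{C^1(N)} \leq \de$ is again Anosov, with hyperbolic splitting close to the one of $X$ and topologically conjugate flow.

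Next, I would apply Theorem \ref{main} with this $\de$ (and any $m \geq 1$) to obtain a perturbation $Y$ of $X$, a compact Riemannian manifold $M$, a finite-dimensional $E \subset \vect^{\infty}(M)$, and a $C^\infty$ embedding $\Theta: N \to E$ such that for every $p \in N$ the curve $t \mapsto \Theta(\phi^t_{Y}(p))$ solves the Euler equation on $M$ with initial datum $\Theta(p)$. In particular $\Sigma := \Theta(N) \subset \vect^{\infty}(M)$ is a finite-dimensional $C^\infty$ submanifold, invariant under the Euler flow $\Phi_t$, on which solutions exist for all time, and $\Theta$ intertwines the flow of $Y$ with $\Phi_t|_{\Sigma}$.

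The last step is to transfer the hyperbolic structure. Because $\Theta$ is a $C^\infty$ diffeomorphism onto its image and satisfies $\Phi_t \circ \Theta = \Theta \circ \phi^t_{Y}$, the derivative $d\Theta$ conjugates the linearized flow on $TN$ with the linearized Euler flow on $T\Sigma$. The splitting $TN = E^s \oplus \langle Y \rangle \oplus E^u$ and its uniform exponential rates thus push forward to a splitting of $T\Sigma$ with the same rates, up to constants controlled by $\|d\Theta\|$ and $\|d\Theta^{-1}\|$, both finite by compactness of $N$. This yields an Anosov structure for $\Phi_t|_{\Sigma}$ and establishes the corollary. I do not expect any genuine obstacle: the only point requiring care is the verification that the pushed-forward splitting is continuous and hyperbolic with uniform constants, and this follows automatically from the smoothness of $\Theta$ on the compact source $N$.
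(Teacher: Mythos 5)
Your proposal is correct and follows essentially the same route as the paper, which simply invokes the $C^1$-openness of the Anosov property, applies Theorem \ref{main} to an Anosov generator (the paper's explicit example is likewise the geodesic flow on the unit tangent bundle of a hyperbolic surface), and transports the structure through the embedding $\Theta$. The only cosmetic remark is that structural stability of Anosov flows gives orbit equivalence rather than topological conjugacy, but your argument only uses the $C^1$-openness of the Anosov property, so nothing is affected.
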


As a explicit example, we can consider a small perturbation of the geodesic flow on the unit tangent bundle of a genus $g\geq 2$ Riemannian surface with constant negative curvature.

More generally, Theorem \ref{main} implies that any dynamics displayed by a vector field on a closed manifold that is structurally stable under small $C^m$ perturbations is found in the Euler equations. 

Finally, let us point out that, by virtue of Theorem \ref{main2}, certain well-known types of steady state solutions of the Euler and Navier Stokes equations in $\TT^2$, $\TT^3$ and $\SS^3$ can be embedded exactly into the phase space of the Euler equations in a higher-dimensional manifold $M$. That is, the lagrangian trajectories of the fluid particles in these steady solutions correspond exactly to eulerian trajectories of non-steady vector fields obeying the Euler equation on $M$. More specifically, we are referring to Kolmogorov steady flows in $\TT^2$ (steady solutions of the Navier-Stokes equations under forcing) and Beltrami fields in $\TT^3$ and $\SS^3$, as they have finite Fourier (or spherical harmonics) expansions.

The paper is organized as follows. The proof of Theorem \ref{main2} on embedding dynamics in $\SS^n$ and $\TT^n$ is given in Section \ref{s1}, with the key Proposition (Proposition \ref{main}) proven in Section \ref{s11}. Section \ref{s2} proves Theorem \ref{main}. We apply these results to prove Theorem \ref{chaos} on chaotic dynamics in Section \ref{s4}. We conclude with some further constructions and corollaries.

\section{Proof of Theorem \ref{main2}}\label{s1}

The proof is broken down into two steps.

\subsection{Step 1}

In the following Proposition, we will call ``polynomic'' those vector fields on $\SS^n$ and $\TT^n$ that can be written as finite sums of sines and cosines (in the case of $\TT^n$) or as polynomial vector fields in $\RR^{n+1}$ that are tangent to $\SS^n$. We define the degree of the polynomic vector field to be the modulus squared of the highest frequency (in $\TT^n$) or the degree of the polynomial in $\RR^{n+1}$. 

\begin{proposition}\label{aux1}
Let $X$ be a polynomic vector field on $\MM$ (=$\TT^{n}$ or $\SS^{n}$). There is a smooth embedding $\Psi: \MM \rightarrow \RR^{d}$ (with $d$ depending only on $n$ and on the degree of the vector field) and a homogeneous quadratic ODE on $\RR^{d}$
\[
\frac{d y_i}{dt}=B_{ijk} y_j y_k \text{ for $i=1,..,d$}\,,
\]
satisfying $B_{ijk}=-B_{kji}$, such that for any point $x \in \MM$, $y(t)=\Psi(\phi^{t}_{X}(x))$ is the solution of the quadratic ODE with initial condition $y(0)=\Psi(x)$. 

Moreover, if the vector field $X$ is divergence-free in $\MM$ (with respect to the round metric in $\SS^n$, or to the flat metric in $\TT^{n}$), then the corresponding quadratic vector field
\[
V(y)=\sum_{ijk} B_{ijk} y_j y_k \frac{\partial}{\partial y_i}
\] 
is also divergence-free in $\RR^d$ with the standard Euclidean metric.

\end{proposition}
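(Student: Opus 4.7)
My plan is to handle the $\TT^n$ and $\SS^n$ cases in parallel, with the same three-step strategy: build an embedding of $\MM$ into $\RR^d$ whose image lies on a single sphere, derive a natural quadratic ODE on $\RR^d$ from the polynomic structure of $X$, adjust it so that it preserves \emph{every} sphere of $\RR^d$, and only then rewrite the coefficients in antisymmetric form.

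For the embedding and the natural ODE: in the torus case I would take a finite symmetric frequency set $S\subset\ZZ^n$ containing both $\{0,\pm e_1,\dots,\pm e_n\}$ and the frequencies of $X$, and define $\Psi(x)=(1,\cos(2\pi\gamma\cdot x),\sin(2\pi\gamma\cdot x))_{\gamma\in S^+}$. The Pythagorean identity places $\Psi(\TT^n)$ on a fixed sphere, and since every frequency in $Xf_i$ has the form $\alpha+\beta$ with $\alpha\in\mathrm{supp}(X)$ and $\beta\in S$, hence in $S+S$, the product-to-sum identities rewrite $Xf_i$ as a quadratic form in the coordinates. In the sphere case I would first extend $X$ to a polynomial vector field $\tilde X$ on $\RR^{n+1}$ tangent to every centered sphere, by splitting $X=X_{\mathrm{even}}+X_{\mathrm{odd}}$ according to degree parity and multiplying each homogeneous piece by the appropriate power of $|y|^2$ to bring it to a common degree of the same parity; each resulting homogeneous summand is automatically tangent to every centered sphere, because $\langle\tilde X,y\rangle$ is then homogeneous and vanishes on $\SS^n$, hence identically. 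I would then reduce $\tilde X$ to a quadratic ODE via a weighted Veronese-type embedding $y\mapsto(c_\alpha y^\alpha)_{|\alpha|\leq M}$, $M\geq\deg\tilde X-1$, with weights chosen so that $|\Psi(y)|^2=(1+|y|^2)^M$, again placing $\Psi(\SS^n)$ on a sphere.

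The main obstacle, and the step I expect to require the most care, is that the quadratic $V$ read off directly from this construction only satisfies $\langle V(y),y\rangle=0$ along $\Psi(\MM)$, not identically on $\RR^d$. I would exploit the freedom in choosing the quadratic representation of $Xf_i$: for the trigonometric embedding, the relevant vanishing ideal is generated by the Pythagorean identity and the product-to-sum relations; for the Veronese-type embedding, by the standard quadratic Veronese relations. In both cases, the cubic obstruction $P(y)=\sum_i y_iV_i(y)$, which vanishes on $\Psi(\MM)$, lies in $\sum_i y_i\cdot I_2$, where $I_2$ is the quadratic part of the ideal, so one can subtract appropriate $q_i\in I_2$ from each $V_i$ to enforce $\sum_iy_iV_i\equiv 0$ without disturbing the dynamics on $\Psi(\MM)$.

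Once identical sphere preservation holds, the antisymmetric representation is a piece of pure algebra: writing $V_i=Q_{ijk}y_jy_k$ with $Q$ symmetric in $(j,k)$, the sphere-preservation condition becomes the cyclic identity $Q_{ijk}+Q_{jik}+Q_{kij}=0$, and the tensor $B_{ijk}:=Q_{ijk}+\tfrac{1}{3}(Q_{jik}-Q_{kij})$ satisfies $B_{ijk}=-B_{kji}$ while differing from $Q$ only by a piece antisymmetric in $(j,k)$, hence producing the same ODE. For the final, divergence-free clause, $B_{ijk}=-B_{kji}$ already forces $B_{iji}=0$, so $\Div V$ collapses to the linear polynomial $\sum_l\bigl(\sum_i B_{iil}\bigr)y_l$; I would verify that this vanishes under the hypothesis $\Div_{\MM}X=0$ by translating the divergence condition into its Fourier (respectively spherical-harmonic) form and tracing through the structure of the embedding coordinates.
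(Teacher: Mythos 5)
Your overall architecture (embed, read off a quadratic ODE, force the cubic identity $\sum_i y_iV_i(y)\equiv 0$, then antisymmetrize) could in principle work, and your final algebraic step is correct: if $Q_{ijk}$ is symmetric in $(j,k)$ and $\sum_{i,j,k}Q_{ijk}y_iy_jy_k\equiv 0$, the cyclic identity holds and $B_{ijk}:=Q_{ijk}+\tfrac13(Q_{jik}-Q_{kij})$ is an admissible antisymmetric representative of the same ODE. But the step you yourself flag as delicate is a genuine gap: you assert that the cubic obstruction $P(y)=\sum_i y_iV_i(y)$, which a priori vanishes only on $\Psi(\MM)$, lies in $\sum_i y_i\cdot I_2$. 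Note that the correction terms $q_i$ must be \emph{homogeneous} quadrics vanishing on $\Psi(\MM)$ (otherwise $V_i-q_i$ is no longer a homogeneous quadratic form), so what you actually need is that the degree-three part of the homogeneous ideal of the cone over $\Psi(\MM)$ equals the product of the linear forms with its degree-two part. That is a quadratic-generation statement about specific real varieties (images of $\TT^n$ under all harmonics of bounded frequency; weighted Veronese images of $\SS^n$) which you do not prove, and it is the entire technical content of your route; the proof is incomplete without it. The divergence-free clause is likewise left as an unverified computation.

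The paper's proof sidesteps this entirely by choosing the quadratic representation so that $B_{ijk}=-B_{kji}$, and hence $\sum_i y_iV_i\equiv 0$ on all of $\RR^d$, holds identically from the start, with no correction step. On $\TT^n$ the coordinates come in pairs $q_k=\sin(2\pi k\cdot x)$, $p_k=\cos(2\pi k\cdot x)$, and the induced dynamics is $\dot q_k=(2\pi k\cdot X)\,p_k$, $\dot p_k=-(2\pi k\cdot X)\,q_k$, where $2\pi k\cdot X$ is already a linear form in the coordinates; each pair simply rotates, so every sphere is preserved exactly. On $\SS^n$ one writes $X=\sum_\mu f_\mu h_\mu$ with $h_\mu(x)=A_\mu x$ the rotation generators, embeds by an orthonormal basis of spherical harmonics, and uses that $h_\mu$ acts on each space of harmonics as an antisymmetric matrix (by $SO(n+1)$-invariance of the harmonics plus integration by parts), so that $B_{\beta\alpha\gamma}=\sum_\mu\theta^\gamma_{\mu\beta}c^\alpha_\mu$ is antisymmetric in $(\beta,\gamma)$ by construction. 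Adopting this representation would close your gap and also reduce the divergence-free clause to a two-line check ($a_k\cdot k=b_k\cdot k=0$ on the torus; $\sum_{\mu,\alpha}c^\alpha_\mu\theta^\alpha_{\mu\beta}=0$ on the sphere).
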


We prove this proposition in Section \ref{s11}. The fact that the vector field in $\RR^{d}$ is divergence-free when $X$ is will not be needed in what follows, but we found it might be of interest.

\subsection{Step 2}

The next and final step consists in embedding the quadratic ODE obtained in step 1 into the phase space of the Euler equation in some manifold $M$. We apply the following special case of Theorem 1.1 in \cite{Taoquadratic}, which we restate here in a slightly different wording, adapted to our setting:

\begin{theorem}[T. Tao \cite{Taoquadratic}]\label{tao}
Let 
\[
\frac{d y_i}{dt}= V_i(y)=\sum_{j, k=1}^{d} \tilde{B}_{ijk} y_j y_k \text{ for $i=1,..,d$}
\]
be a homogeneous quadratic ODE on $\RR^{d}$, with $\tilde{B}_{ijk}=\tilde{B}_{ikj}$ and
\begin{equation}\label{conditiontao}
\sum_{i, j, k=1}^{d} \tilde{B}_{ijk} y_i y_j y_k=0\,.
\end{equation}
Then there is a Riemannian manifold $M$ (that can be explicitly taken to be $SO(d) \times \TT^{d}$) and a linear injective map $T: \RR^{d} \rightarrow \vect^{\infty}(M)$ such that, for any $y \in \RR^d$, the time-dependent vector field $u_{t}:=T(\phi^{t}_{V}(y))$ is a smooth solution to the Euler equation on $M$ with initial condition $u_0=T(y)$.
\end{theorem}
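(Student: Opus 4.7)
The approach is via Arnold's realization of Euler as the reduced geodesic equation on the group of volume-preserving diffeomorphisms, or, more concretely, as the ODE $\partial_t u = -P(\nabla_u u)$ on divergence-free vector fields, where $P$ is the Leray projector. The goal is then to construct a finite-dimensional subspace $E = T(\RR^d)\subset\vect^{\infty}(M)$ on which this bilinear operator restricts exactly to the prescribed homogeneous quadratic vector field on $\RR^d$. The choice $M = SO(d)\times\TT^d$ is a natural workbench: the torus carries a plane-wave decomposition whose nonlinear interactions are controlled by trigonometric sum formulas, while $SO(d)$ with a bi-invariant metric admits a basis of right-invariant Killing (hence automatically divergence-free) vector fields $R_A$, $A\in\mathfrak{so}(d)$, whose commutation relations $[R_A,R_B] = R_{[A,B]}$ can realize an arbitrary skew-symmetric algebraic structure provided $d$ is large enough.

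I would first choose, for each index $i\in\{1,\ldots,d\}$, a frequency $k_i\in\ZZ^d$ and a skew matrix $A_i\in\mathfrak{so}(d)$, together with a Kaluza-Klein style product metric on $M$ whose torus directions carry weights to be tuned. I would then define
\[
T(y)(\theta,R) \;=\; \sum_i y_i \bigl(\cos(2\pi k_i\cdot\theta)\, R_{A_i}(R) + \text{corrector on } \TT^d\bigr),
\]
with the corrector chosen so that $T(y)$ is divergence-free on $(M,g)$ (the $SO(d)$-part is so automatically). Linearity and injectivity of $T$ are built in by construction.

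The heart of the proof is then to compute $P(\nabla_{T(y)} T(y))$ and verify that it equals $-\sum_{i,j,k}\tilde B_{ijk}\, y_j y_k\, T(e_i)$. The nonlinearity decomposes in a natural way: products of cosines produce Fourier modes $k_j\pm k_k$, and the bracket-type interaction of the right-invariant fields yields $R_{[A_j,A_k]}$, whereas the symmetric part of the covariant derivative is expected to land in the pure-gradient sector and be annihilated by $P$. By choosing the frequencies $k_i$ sufficiently lacunary, one arranges that each admissible interaction $k_j\pm k_k$ matches at most one $k_i$; by choosing the matrices $A_i$ (exploiting the abundance of room in $\mathfrak{so}(d)$ for $d$ large) one reproduces the prescribed structure constants $\tilde B_{ijk}$; and by tuning the metric weights one absorbs the remaining numerical factors. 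The symmetry $\tilde B_{ijk}=\tilde B_{ikj}$ is automatic from the $(j,k)$-symmetry of $y_j y_k$, and the null-form condition $\sum_{ijk}\tilde B_{ijk}y_iy_jy_k=0$ is forced by the Euler $L^2$-energy conservation, so it is consistent rather than an extra constraint.

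The \emph{main obstacle} is the simultaneous requirement of closure of $E$ under the Euler nonlinearity, exact matching of the coefficients with $\tilde B_{ijk}$, compatibility with divergence-freeness, and the action of the nonlocal Leray projector. Closure fails if the product of two building blocks lands outside the ansatz (a disallowed Fourier mode, or a bracket that is not one of the chosen $A_i$), so the collection $\{(k_i,A_i)\}$ must be genuinely generic; matching demands delicate bookkeeping of trigonometric, metric, and $\mathfrak{so}(d)$-structure-constant factors; and the Leray projection must act diagonally on the ansatz, which is what the bi-invariance of the $SO(d)$-metric and the mutual orthogonality of the chosen plane waves are arranged to guarantee. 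The explicit construction in \cite{Taoquadratic} is a careful simultaneous calibration of these parameters that makes every ingredient fit.
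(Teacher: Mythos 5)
This statement is quoted from \cite{Taoquadratic}; the paper does not reprove it, but it does record the actual form of the map $T$, and that form is incompatible with your ansatz. In Tao's construction one has
\[
T(e_{\mu})=\Pi^{*}U_{\mu}+\sum_{\nu=1}^{d}\Pi^{*}F_{\mu\nu}\,\frac{\partial}{\partial t_{\nu}},
\]
where $\Pi\colon SO(d)\times\TT^{d}\to SO(d)$ is the projection, $U_{\mu}(Q)=S_{\mu}Q$ is a right-invariant field on $SO(d)$ with $S_{\mu}\in\mathfrak{so}(d)$ built from the coefficients $\tilde{B}_{ijk}$, and $F_{\mu\nu}(Q)=Q_{\nu\mu}$ is a matrix coefficient. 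In particular every field in the image of $T$ is \emph{independent of the torus coordinates}: the torus factor is not a reservoir of plane waves but a set of extra flat directions whose components record the entries of $Q$, and the quadratic structure is encoded in the matrices $S_{\mu}$ and in the choice of metric on $M$. Your proposal of building blocks $\cos(2\pi k_{i}\cdot\theta)\,R_{A_{i}}$ with lacunary frequencies is therefore a genuinely different route, and it runs into exactly the difficulty you name but do not resolve: closure. With $d$ building blocks there are on the order of $d^{2}$ interaction outputs at frequencies $k_{j}\pm k_{k}$; lacunarity makes these mutually distinct, so almost none of them can coincide with one of the $d$ input frequencies, and the unmatched terms are not pure gradients in general, so they survive the Leray projection and throw the solution out of $E$. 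Nothing in your sketch prevents this --- lacunarity makes the interactions distinguishable, it does not make them vanish.

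More broadly, the entire content of the theorem is the ``careful simultaneous calibration'' that your final sentence defers back to \cite{Taoquadratic}: you list the constraints (closure of $E$ under the projected nonlinearity, exact matching of the coefficients $\tilde{B}_{ijk}$, divergence-freeness, diagonal action of the Leray projector) but verify none of them. The two points you do check --- that the symmetrization $\tilde{B}_{ijk}=\tilde{B}_{ikj}$ costs nothing and that the null-form condition \eqref{conditiontao} is necessary by energy conservation --- are correct, but they are observations about the hypotheses rather than steps of a proof. As written, this is a plausible research plan for a different construction, not a proof of the theorem.
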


Observe that the coefficients $B_{ijk}$ in Proposition \ref{aux1} do not directly satisfy the hypothesis in Theorem \ref{tao}, because they are not symmetric in the $j, k$ indices. Nevertheless, setting
\[
\tilde{B}_{ijk}:=\frac{1}{2}(B_{ijk}+B_{ikj})
\]
we see that the coefficients $\tilde{B}_{ijk}$ and $B_{ijk}$ define the same ODE. Moreover, by virtue of Proposition \ref{aux1} we have that  $B_{ijk}=-B_{kji}$, so
\[
\sum_{i, j, k=1}^{d} \tilde{B}_{ijk} y_i y_j y_k=0 \,.
\]

Thus we can apply Theorem \ref{tao} to the ODE given by the coefficients $\tilde{B}_{ijk}$. The embedding $\Phi$ in the statement of Theorem \ref{main2} is then given by $\Phi=T \circ \Psi$. 

To conclude the proof of Theorem \ref{main2}, it remains to be shown that $\Phi=T \circ \Psi$ is smooth for any $C^{k}$ topology (with $k\in [0, \infty]$) in the space of smooth divergence-free fields $\vect^{\infty}(M)$.

For finite $k$ this is obvious, because the embedding $\Psi: \MM \rightarrow \RR^{d}$ is smooth and $T$ is a linear injective map. Indeed, let $\{e_{1},...,e_{d}\}$ denote the standard basis of $\RR^{d}$; the images $\{w_{\mu}=T(e_{\mu})\}$ are a basis of the finite dimensional linear subspace $E \subset \vect^{\infty}$ where we embed $\MM$. The $C^k$ norms of the basis elements are bounded, $||w_{\mu}||_{C^{k}(M)}\leq C(k)$, so the smoothness of $\Psi: \MM \rightarrow \RR^{d}$ immediately implies that of $\Phi=T \circ \Psi: \MM\rightarrow E$.

To see that $\Phi$ is smooth for the $C^{\infty}$ topology in $\vect^{\infty}(M)$, we must show that the $C^{k}$-norms of the vector fields $w_{
\mu}=T(e_{\mu})$ are bounded independently of $k$. This requires a more detailed  discussion of the nature of the map $T$ in Tao's Theorem. 

 The map $T$ has the form
\[
T(e_{\mu})=\Pi^{*} U_{\mu}+\sum_{\nu=1}^{d} \Pi^{*} F_{\mu \nu} \frac{\partial}{\partial t_{\nu}}
\]
where $\Pi$ is the projection map $\Pi: SO(d)  \times \TT^{d} \rightarrow SO(d)$, $\{\frac{\partial}{\partial t_{\nu}}\}_{\nu=1}^{d}$ is the standard basis of  $T \TT^{d}$, and $U_{\mu}$ and $F_{\mu \nu}$ are vector fields and functions on $SO(d)$, respectively, that we will describe below: our goal is to show that their $C^{k}$ norms are bounded uniformly in $k$, so that the vector fields $w_{\mu}=T(e_{\mu})$ have the same property.

Consider $SO(d)$ as a submanifold in the linear space Mat$(d)$ of $d\times d$ matrices, so that we can identify the tangent space $T_{Q} SO(d)$ at any orthogonal matrix $Q$ with a linear subspace of Mat$(d)$. As defined in \cite{Taoquadratic}, Section 5, for any given index $\mu=1,...,d$, $U_{\mu}$ is the right invariant vector field on $SO(d)$ whose value at any $Q \in SO(d)$ is given by
\[
U_{\mu}(Q)= S_{\mu} Q
\]
where $S_{\mu}$ is a matrix in $\mathfrak{so}(d)$ that depends only on the coefficients $B_{ijk}$ of our quadratic ODE. As for the functions $F_{\mu \nu}: SO(d)\rightarrow \RR$, they are defined as
\[
F_{\mu \nu}(Q)= e_{\mu} \cdot Q e_{\nu}=Q_{\nu \mu} \,.
\]
To see that the $C^{k}$-norms of the functions and the vector fields are bounded, consider the standard basis $\{E_{\alpha}\}$ of the Lie algebra $\mathfrak{so}(d)$ (consisting of antisymmetric matrices having only two non-zero entries, one equal to $1$ and the other to $-1$). 
The derivative of the function $F_{\mu \nu}$ in the direction given by the tangent vector $Q E_{\alpha_1}$ is given by
\[
\frac{d}{dt}\bigg|_{t=0} F_{\mu \nu}(Q e^{tE_{\alpha_1}})=e_{\mu} \cdot Q E_{\alpha_1} e_{\nu}
\]
Iterating this, we see that the derivatives of order $k$ at the point $Q$ of the function $F_{\mu \nu}$ have the form
\[
e_{\mu} \cdot Q E_{\alpha_k}... E_{\alpha_1} e_{\nu}
\]
The products of the matrices $E_{\alpha}$ have at most one non-zero element in any row or column, equal to $1$ or $-1$. Thus the $k$-derivatives are always equal to some single entry $Q_{\rho \lambda}$ of the matrix $Q$, hence the $C^{k}$ norms of the functions $F_{\mu \nu}$ are bounded independently of $k$. 

As for the $C^{k}$-norms of the right-invariant vector fields $U_{\mu}$, it is enough to check them at the identity. In other words, it is enough to prove that any $k$-iterated commutators
\[
[E_{\alpha_1},[E_{\alpha_2},[....,[E_{\alpha_k}, S_{\mu}]...]]
\]
have norms (for any metric on $\mathfrak{so}(d))$ that can be bounded independently of $k$. This easily follows from the fact that the commutators of the basis elements  $\{E_{\alpha}\}$ are all of the type $[E_{\alpha}, E_{\beta}]=\pm E_{\gamma}$.

\subsection{The dimension of the manifold $M$}\label{dimension2}
As the proof of the Proposition \ref{aux1}, given in Section \ref{s11} below, will make manifest, the parameter $d$ in $SO(d)\times \TT^{d}$ is at most the dimension of the linear space of trigonometric polynomials of degrees up to the degree, say $D$, of $X$ (when $\MM=\TT^n)$; or, in the case of $\SS^n$, at most the dimension of the space of harmonic polynomials of degrees up to one minus the degree of $X$. 

More explicitly, in the case of the torus, this dimension is equal to the number of points in $\ZZ^n$ that lie in the (closed) ball of radius $\sqrt{D}$; while in the case of $\SS^n$, it is equal to
\[
d=\sum_{j=0}^{D}\binom{j+n-1}{n}\frac{2j+n-1}{j+n-1}=\binom{D+n}{n}\frac{D(2Dn+n^2+1)}{n(n+1)(D+n)}
\]

The dimension of $M=SO(d)\times \TT^{d}$ grows then as the square of these numbers.

\section{Proof of Proposition \ref{aux1}}\label{s11}

\subsection{Proof for $\MM=\TT^{n}$}

The field $X$ can be written as a sum of the form
\[
X(x)=\sum_{k \in \ZZ^n, \,|k| \leq \Lambda} a_{k} \sin(2 \pi k \cdot x)+b_{k} \cos(2 \pi k \cdot x) \,\, 
\]
with $a_{k}, b_k \in \RR^n$ and $a_{k}=-a_{-k}$, $b_{k}=b_{-k}$.

Let $d(\Lambda)$ be the number of integer lattice points contained in the $n$-ball of radius $\Lambda$. Consider the map $\Psi: \TT^{n} \rightarrow \RR^{2d(\Lambda)}$ given by
\[
\Psi(x)=\{\sin(2 \pi k \cdot x), \, \cos(2 \pi k \cdot x)\}_{k \in \ZZ^{n}\cap B^{n}(0, \Lambda)}
\]
where we represent the points $(q, p) \in \RR^{2d(\lambda)}$ as $(q, p):=\{q_{k}, p_k\}_{k \in \ZZ^{n}\cap B^{n}(0, \Lambda)}$.

\begin{lemma}\label{emb}
For $\Lambda \geq 1$, the map $\Psi$ is an embedding.

\end{lemma}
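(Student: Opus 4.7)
The plan is to verify the two requirements for $\Psi\colon \TT^n \to \RR^{2d(\Lambda)}$ to be a smooth embedding: (a) $\Psi$ is injective, and (b) the differential $d\Psi_x$ has full rank $n$ at every $x \in \TT^n$. Since $\TT^n$ is compact, this automatically upgrades to a topological embedding. The key observation is that the hypothesis $\Lambda \geq 1$ guarantees that the standard basis vectors $e_1,\dots,e_n \in \ZZ^n$ all lie in $B^n(0,\Lambda)$, so the coordinate pairs $\sin(2\pi x_j),\cos(2\pi x_j)$ for $j=1,\dots,n$ appear among the components of $\Psi$. Both (a) and (b) will follow by examining only these $2n$ coordinates; the remaining components of $\Psi$ are irrelevant for the embedding property (they carry extra information but cannot spoil injectivity or full rank).

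For injectivity, suppose $\Psi(x) = \Psi(y)$. Matching the components indexed by $k = e_j$ yields $\sin(2\pi x_j) = \sin(2\pi y_j)$ and $\cos(2\pi x_j) = \cos(2\pi y_j)$ for each $j$, which forces $x_j \equiv y_j \pmod 1$; hence $x = y$ in $\TT^n$.

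For the immersion statement, a direct computation gives
\[
\frac{\partial}{\partial x_i}\sin(2\pi e_j\cdot x) = 2\pi\,\delta_{ij}\cos(2\pi x_j), \qquad \frac{\partial}{\partial x_i}\cos(2\pi e_j\cdot x) = -2\pi\,\delta_{ij}\sin(2\pi x_j).
\]
Thus the submatrix of $d\Psi_x$ formed by the $2n$ columns indexed by $k = e_1,\dots,e_n$ is block-diagonal, the $j$-th diagonal block being the $1\times 2$ row $2\pi(\cos(2\pi x_j),-\sin(2\pi x_j))$, which has norm $2\pi \neq 0$. Consequently, the $n$ rows of $d\Psi_x$ are linearly independent, so $d\Psi_x$ has rank $n$ and $\Psi$ is an immersion. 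There is essentially no obstacle in this lemma: once one isolates the contributions of the fundamental frequencies $e_1,\dots,e_n$ (which is precisely what $\Lambda\geq 1$ affords), both conditions reduce to the elementary fact that $t\mapsto(\sin(2\pi t),\cos(2\pi t))$ is a smooth embedding of $\RR/\ZZ$ into $\RR^2$.
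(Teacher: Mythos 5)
Your proof is correct and follows essentially the same route as the paper: reduce to the fundamental frequencies (the paper reduces to $\Lambda=1$, you isolate the coordinates indexed by $e_1,\dots,e_n$), then verify injectivity and injectivity of the differential directly, using that $\sin$ and $\cos$ of a frequency cannot vanish simultaneously. Your version is slightly more explicit about the block structure of $d\Psi_x$ and about compactness upgrading an injective immersion to an embedding, but the substance is identical.
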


\begin{proof}[Proof of Lemma \ref{emb}]
It suffices to prove the claim for $\Lambda=1$. Denote by $B$ the $n$-dimensional ball of radius 1. The frequencies $k \in \ZZ^{n}\cap B$ are the ones of the form
\[
k=\pm(0, 0,...,1,0,...,0)
\]
plus the zero vector, so $d(\Lambda)=2n+1$. 

First we show that $d\Psi$ is injective. Suppose that at some point $x$ there is a vector $v$ in the kernel of the differential :

\[
d_x \Psi (v)= \sum_{k}  2 \pi v \cdot k \cos(2 \pi k \cdot x) \frac{\partial}{\partial q_{k}}-2 \pi v \cdot k \sin(2 \pi k \cdot x)\frac{\partial}{\partial p_{k}}=0
\]
this means that $v \cdot k=0$ for all $k \in \ZZ^n \cap B$. But the frequencies $k$ span the whole $\RR^n$, so we must have $v=0$. 

It remains to be shown that for any two distinct points $x, y \in \TT^n$ we must have $\Psi(x)\neq \Psi(y)$. This is easy to see, for if $\Psi(x)=\Psi(y)$,
\[
\cos(2 \pi k \cdot x)=\cos(2 \pi k \cdot y) \,,\,\,\,\sin(2 \pi k \cdot x)=\sin(2 \pi k \cdot y) 
\]
for integer frequencies $k$ spanning $\RR^n$. This is only possible if $x=y +2 \pi m$ for some $m \in \ZZ^n$, that is, if $x$ and $y$ label the same point in $\TT^n$.
\end{proof}

\begin{remark}\label{harmonics}
If the Fourier expansion of $X$ does not have a constant ($k=0$) component, we can define the map $\Psi$ as 
\[
\Psi(x)=\{\sin(2 \pi k \cdot x), \, \cos(2 \pi k \cdot x)\}_{k \in \ZZ^{n}\cap B^{n}(0, \Lambda)\setminus\{0\}}\,.
\]
It can be readily checked that this does not affect the assertion in Lemma \ref{emb}, nor any further step in the proof. 
\end{remark}

Consider now the vector field $\Psi_{*}(X)$ in $\Psi(\TT^n)$. It has the form

\begin{align*}
d_x \Psi(X)= \sum_{k, k' \in \ZZ^{n}\cap B^{n}(0, \Lambda)} 2\pi \bigg( a_{k'} \cdot k \sin(2 \pi k' \cdot x) \cos(2 \pi k \cdot x)+ \\
+b_{k'} \cdot k \cos(2 \pi k'\cdot x) \cos(2 \pi k \cdot x) \bigg) \frac{\partial}{\partial q_k} \\ -2 \pi \bigg( a_{k'} \cdot k \sin(2 \pi k' \cdot x) \sin(2 \pi k \cdot x)+ \\
+b_{k'} \cdot k \cos(2 \pi k'\cdot x) \sin(2 \pi k \cdot x) \bigg) \frac{\partial}{\partial p_k}
\end{align*} 
Consider as well the following vector field on the whole space $\RR^{2d(\lambda)}$

\[
V(q, p)=2 \pi \sum_{k} \sum_{k'} (a_{k'} \cdot k \,q_{k'} +b_{k'} \cdot k \, p_{k'}) p_k \frac{\partial}{\partial q_k}-(a_{k'} \cdot k \, q_{k'} +b_{k'} \cdot k \, p_{k'}) q_k \frac{\partial}{\partial p_k}\,.
\]
We see that $d_x \Psi(X)=V(q, p)$ when $(q, p)=(q(x), p(x)) \in \Psi(\TT^{n})$, that is, $V$ is tangent to $\Psi(\TT^{n})$ and there, it is equal to $\Psi_{*} X$. 

The vector field $V$ defines the homogeneous quadratic ODE:
\[
\frac{d q_k}{dt}=\sum_{k'} (a_{k'} \cdot k\, q_{k'} +b_{k'} \cdot k\, p_{k'}) p_k
\]
\[
\frac{d p_k}{dt}=- \sum_{k'} (a_{k'} \cdot k\, q_{k'} +b_{k'} \cdot k\, p_{k'}) q_k
\]

To check that this ODE satisfies the antisymmetry condition in Proposition \ref{aux1}, let us relabel the coordinates in $\RR^{2 d(\Lambda)}$ as $x_{\alpha}$, with $\alpha \in \{1,..., 2d(\Lambda)\}$. We see that the coefficients $B_{\alpha \beta \gamma}=0$ unless $x_{\alpha}=q_{k}$ and $x_{\gamma}=p_{k}$ or viceversa. In that case, we have either
\[
B_{q_k, q_{k'}, p_{k}}= a_{k'}\cdot k\, q_{k'}=-B_{p_k, q_{k'}, q_k}
\]
or
\[
B_{q_k, p_{k'}, p_{k}}= b_{k'}\cdot k\, p_{k'}=-B_{p_k, q_{k'}, q_k}
\]
Thus $B_{\alpha \beta \gamma}=-B_{\gamma \beta \alpha}$, that is, the coefficients of the ODE are always antisymmetric under exchange of the first and last index, as we wanted to show.

It remains to be shown that, if $X$ is divergence-free, $V$ is also divergence-free, i.e
\[
\Div V=\sum_k \frac{\partial {V_{q_k}}}{\partial q_k}+\frac{\partial {V_{p_k}}}{\partial p_k}=0
\]

Indeed, we have
\[
\frac{\partial {V_{q_k}}}{\partial q_k}=a_{k} \cdot k\, p_k
\]
\[
\frac{\partial {V_{p_k}}}{\partial p_k}=b_{k} \cdot k\, q_k
\]

and if $X$ is divergence-free, the coefficients in its Fourier expansion satisfy $a_k \cdot k=b_k \cdot k=0$.

\subsection{Proof for $\MM=\SS^{n}$}
Let $\{A_{\mu}\}_{\mu=1}^m$ be a basis of the Lie algebra $\mathfrak{so}(n+1)$ of $(n+1) \times (n+1)$ traceless antisymmetric matrices (so here $m=\frac{n(n+1)}{2}$). It is easy to check that the vector fields in $\RR^{n+1}$ given by
\begin{equation}\label{hopf}
h_{\mu}(x)=A_{\mu} \cdot x
\end{equation}
(where $A_{\mu} \cdot x$ denotes matrix multiplication of $A_{\mu}$ and the vector $x\in \RR^{n+1}$) are tangent to $\SS^{n} \subset \RR^{n+1}$ and, moreover, for any $x \in \SS^{n}$,
\[
\text{ span }\{h_1(x),...,h_m(x)\}=T_{x} \SS^{n}
\]
The vector field $X$ in $\SS^{n}$ can thus be written as
\[
X=\sum_{\mu=1}^{m} f_{\mu} h_{\mu}
\] 
where $f_{\mu} : \SS^{n} \rightarrow \RR$ are smooth functions. 

For any given $N\in \NN$, let $\{Y_{\alpha}\}_{\alpha=1}^{d(N)}$ be a $L^2$-orthonormal basis of the space of spherical harmonics in $\SS^{n}$ of degree up to $N$. We recall that this is the linear space spanned by the eigenfunctions of the Laplace-Beltrami operator (defined with the round metric) of eigenvalues up to $N(N+n-1)$. Equivalently, they are obtained as the restrictions to $\SS^{n}$ of the homogeneous harmonic polynomials in $\RR^{n+1}$ of degrees up to $N$. We label the elements of the basis in increasing degree, so that if the degree of $Y_{\alpha}$ is less than the degree of $Y_{\beta}$, $\alpha < \beta$. 

Note that the components of the vector fields $\{h_{\mu}\}$ are homogeneous polynomials of degree $1$. Since the vector field $X$ is the restriction to $\SS^{n}$ of a polynomial vector field in $\RR^{n+1}$, its components  $f_{\mu}$ must be given by a finite sum of spherical harmonics
\[
f_{\mu}= \sum_{\alpha=1}^{d(N)} c^{\alpha}_{\mu} Y_{\alpha}
\]
up to some degree $N$. 

Define a map $\Psi: \SS^{n} \rightarrow \RR^{d(N)}$ as
\[
\Psi(x)=(Y_{1}(x),...,Y_{d(N)}(x)) \,.
\]

As in the case of the torus, our first goal is to prove that the map $\Psi$ is an embedding. It suffices to do so for $N=1$, where the spherical harmonics are just restrictions to $\SS^n$ of affine functions on $\RR^{n+1}$, so the map is simply
\[
\Psi(x)=(1, x_1(x), x_2(x), ....,x_{n+1}(x))
\]
or some coordinate permutation and rotation of the above, depending on our choice of basis of spherical harmonics. This is clearly an embedding. 
\begin{remark}\label{harmonics2}
As in the case of the torus, if the functions $f_{\mu}$ have zero mean (i.e, they do not have projection into the constant functions on $\SS^n$), we can define the map $\Psi$ as 
\[
\Psi(x)=(Y_2(x), Y_3(x),...,Y_{d(N)}(x))\,.
\]
\end{remark}

We have
\[
d_x \Psi(X)=\sum_{\mu=1}^{m} \sum_{\alpha=1}^{d(N)} c^{\alpha}_{\mu} Y_{\alpha}(x)  ( h_{\mu}(Y_{1})(x),...,h_{\mu}(Y_{d(N)}(x)))\,.
\]

Here $h_{\mu}(Y_{\beta})(x)$ represents the derivative of the eigenfunction $Y_{\beta}$ in the direction of the vector field $h_{\mu}$. We now claim that we can express these derivatives as linear combinations of the original spherical harmonics $\{Y_{\alpha}\}$, with $\alpha \leq d(N)$. In other words, there are explicit coefficients $\theta^{\gamma}_{\mu \beta}$, with $\mu=1,...,m$, $\gamma, \beta=1,...,d(N)$ so that
\[
h_{\mu}(Y_{\beta})(x)=\sum_{\gamma=1}^{d(N)} \theta^{\gamma}_{\mu \beta} Y_{\gamma}(x)\,.
\]
These coefficients will depend on the basis of harmonics $\{Y_{\alpha}\}$, but not on the point $x$ (and actually, they will be zero unless the degree of $Y_{\gamma}$ matches that of $Y_{\beta}$, but we will not use this property).

The existence of such coefficients is a straightforward consequence of the relationship between the spherical harmonics  in $\SS^n$ and the representations of the group $SO(n+1)$, but in order to keep the article as elementary and self-contained as possible, we will give here a simple proof. 

Recall that the vector fields $h_{\mu}$ are given by
\[
h_{\mu}(x)=A_{\mu} \cdot x\,.
\]
For any $t \in \RR$, the matrices $\Lambda^{t}_{\mu}:=\exp (t A_{\mu})$ are elements of $SO(n+1)$. Thus, if $P(x)$ is an harmonic polynomial of degree $\alpha$, $Q(x):=P(\Lambda^{t}_{\mu} x)$ is also an harmonic polynomial of the same degree. 

Hence the spherical harmonics, being restrictions to $\SS^n$ of the harmonic polynomials, inherit this invariance under the action of $\Lambda^{t}_{\mu}$, so that for any basis element $Y_{\beta}$ we have
\[
Y_{\beta}(\Lambda^{t}_{\mu} x)=\sum_{\gamma=1}^{d(N)} \Theta^{\gamma}_{\beta}(\Lambda^{t}_{\mu}) Y_{\gamma}(x)
\]
where the coefficients $\Theta^{\gamma}_{\beta}(\Lambda^{t}_{\mu})$ do not depend on the point.

Now observe that
\[
\frac{d}{dt}\bigg|_{t=0} Y_{\beta}(\exp(t A_{\mu}) x)= h_{\mu}(Y_{\gamma})(x)\,,
\]
so defining 
\[
\theta^{\gamma}_{\mu \beta}=\frac{d}{dt}\bigg|_{t=0} \Theta^{\gamma}_{\beta}(\exp(t A_\mu))
\]
our claim follows.

Hence 
\[
d_x \Psi(X)=\sum_{\mu=1}^{m} \sum_{\alpha=1}^{d(N)} \sum_{\gamma=1}^{d(N)} c^{\alpha}_{\mu}  Y_{\gamma}(x) Y_{\alpha}(x)  (\theta^{\gamma}_{\mu 1},...,\theta^{\gamma}_{\mu d(N)} )\,.
\]

This means that the quadratic vector field in $\RR^{d(N)}$ defined as
\[
V=\sum_{i=1}^{3} \sum_{\alpha, \beta, \gamma =1}^{d(N)} \theta^{\gamma}_{i \beta} c^{\alpha}_{\mu} y_{\gamma} y_{\alpha} \frac{\partial}{\partial y_{\beta}}
\]
is tangent to $\Psi(\SS^{n})$, and coincides there with $d \Psi(X)$.

Furthermore, the coefficients $\theta^{\gamma}_{\mu\beta}$ are antisymmetric in $\gamma, \beta$.:
\[
\theta^{\gamma}_{\mu \beta}=\int_{\SS^{n}} h_{\mu}(Y_{\beta})(x) Y_{\gamma}(x) d \Omega(x)=-\int_{\SS^{n}} h_{\mu}(Y_{\gamma})(x) Y_{\beta}(x) d \Omega(x)=-\theta^{\beta}_{\mu \gamma}
\]
(here we have integrated by parts and used the fact that the vector fields $h_{\mu}$, being infinitesimal generators of isometries, are divergence-free); so the coefficients
\[
B_{\beta \alpha \gamma}:= \sum_{\mu=1}^{m} \theta^{\gamma}_{\mu\beta} c^{\alpha}_{\mu}
\]
satisfy $B_{\beta \alpha \gamma}=-B_{\gamma \alpha \beta}$, as we wanted to show.

Finally, we are left to prove that, assuming the vector field $X$ in $\SS^{n}$ is divergence-free with respect to the round metric, $V$ in $\RR^{d(N)}$ is divergence-free with respect to the Euclidean metric. 

In terms of the expansion in spherical harmonics, the divergence-free condition reads
\[
\Div_{\SS^{n}} X=\sum_{\mu=1}^{m} \sum_{\alpha=1}^{d(N)} c^{\alpha}_{\mu} h_{\mu}(Y_{\alpha})=\sum_{\mu=1}^{m} \sum_{\alpha, \beta=1}^{d(N)} c^{\alpha}_{\mu} \theta^{\alpha}_{\mu \beta} Y_{\beta}=0\,.
\]
Thus we conclude that, if $X$ is divergence-free, the coefficients $c^{\alpha}_{\mu}$ satisfy, for any $\beta$:
\begin{equation}\label{div}
\sum_{\mu=1}^{m} \sum_{\alpha=1}^{d(N)} c^{\alpha}_{\mu} \theta^{\alpha}_{\mu \beta}=0\,,
\end{equation}

so
\[
\Div_{\RR^{d(N)}} V= \sum_{\mu=1}^{m} \sum_{\alpha, \beta, \gamma =1}^{d(N)} \theta^{\gamma}_{\mu \beta} c^{\beta}_{\mu} y_{\gamma} +\theta^{\beta}_{\mu \beta} c^{\alpha}_{\mu} y_{\alpha}=0
\]
(here we have also used the fact that $\theta^{\beta}_{\mu \beta}=0$ because of antisymmetry).

\section{Proof of Theorem \ref{main}}\label{s2}

The idea of the proof is as follows: first the manifold $N$ is embedded into a sphere $\SS^n$ of suitable dimension, and the push-forward of the vector field $X$ is extended to the whole $\SS^n$. By constructing this extension in a suitable way, we can ensure that any other vector field close enough to the extended vector field has an invariant manifold diffeomorphic to $N$, on which it is very close to $X$. We then take a polynomic approximation of the extension of $X$ and apply Theorem \ref{main2} to conclude. 

More precisely, let $F: N \rightarrow \SS^{n}$ be an embedding of $N$ into $\SS^n$. Provided we take the dimension of the sphere high enough, such an embedding always exists.

Our aim now is to extend the vector field $F_{*}(X)$ to the whole $\SS^{n}$, so that $F(N)$ is an $r$-normally hyperbolic invariant manifold:
\begin{definition}[Normally hyperbolic invariant manifold, \cite{HSS} Section 1]
Let $Y$ be a smooth vector field on a manifold $M$. We will say that a submanifold $V \subset M$ is an $r$-normally hyperbolic invariant manifold of $Y$ if $Y$ is tangent to $V$ and, moreover, there is a continuous splitting
\[
TM|_{V}=E^{u} \oplus TV \oplus E^{s} 
\]
and constants $c>0$, $0\leq \mu< \lambda$ such that, for any $x \in V$ we have:
\begin{enumerate}

\item For any $t\in \RR$,  $d_x\phi_{Y}^{t}(E^{u}_{x})=E^{u}_{\phi^{t}_{Y}(x)} $, and analogously for $E^{s}$.

\item For any $v \in E^{s}_{x}$ and $t\geq 0$, $\|d_x\phi_{Y}^{t}(v)\| \leq c e^{-\lambda t}\|v\|$.

\item For any $v \in E^{u}_{x}$ and $t\geq 0$, $\|d_x\phi_{Y}^{-t}(v)\| \leq c e^{-\lambda t}\|v\|$.

\item For any $v \in T_xV$ and $t\in \RR$, $\|d_x\phi_{Y}^{t}(v)\| \leq c e^{\frac{\mu}{r}|t|}\|v\|$.

\end{enumerate}

\end{definition}

Let $(x, z_1,...,z_k)$, $x \in N$, $(z_1,...,z_k) \in N_{F(x)} F(N)$, $k=n-\text{ dim }N$, be local coordinates parametrizing a tubular neighbourhood $V$ of $F(N)\subset \SS^n$. Define a vector field $Z$ that is given in the local coordinates by
\[
Z(x, z_1,...,z_k)=X(x)-C(z_1 \frac{\partial}{\partial z_1}+...+z_k \frac{\partial}{\partial z_k})\,.
\]
Because $N$ is compact, if we take the constant $C$ large enough the field $Z$ is $r$-normally hyperbolic on $F(N)$ for any a priori chosen $r$. We then extend it smoothly in an arbitrary way to the rest of $\SS^n$, still denoting this extension by $Z$. 

A crucial property of $r$-normally hyperbolic flows is their structural stability (Theorem 4.1 in \cite{HSS}), that is, any other vector field $Z'$ close enough to $Z$ in the $C^r$ norm, $r\geq 1$, has the following property: there is an embedding $F': N \rightarrow \SS^n$, close to $F$ in the $C^r$ norm, such that $Z'$ is tangent to $F'(N)$. This also implies, in particular, that the vector field $dF'^{-1}( Z'|_{F'(N)})$ on $N$ is close to $X$ in the $C^{r-1}$ norm. 

We now take $Z'$ to be a polynomic vector field on $\SS^n$, approximating $Z$ to any desired degree of accuracy (more precisely, we approximate by polynomials the components of the vector field $Z$ with respect to the vector fields $h_{\mu}$ defined in Section \ref{s11}). Theorem \ref{main} follows by applying Theorem \ref{main2} to $Z'$. 

\subsection{The dimension of $M$}\label{dimension}
The dimension of the Riemannian manifold $M$ in Theorem \ref{main} will depend only on the dimension $n$ of the sphere in which we embed $N$, and on the degree of the polynomic vector field $Z'$ approximating the extension of $X$. More precisely, $M=SO(d)\times \TT^d$, where $d$ is the dimension of the space of spherical harmonics in $\SS^n$ whose degrees are less than that of $Z'$ (see Subsection \ref{dimension2}). The degree of $Z'$ will in turn depend on the acceptable error $\epsilon$ in the approximation of $Z$ by $Z'$, which in applications will be determined by the robustness of the dynamical feature of $X$ we are interested in. 

To get a quantitative bound on the degree, and hence on the dimension, as a function of $\epsilon$, we can use a multidimensional Jackson-type theorem. For example, by virtue of Theorem 2 in \cite{Bagby}, we have that, for any integer $k\geq 0$, there are  polynomials $Z'$ of any degree satisfying:
\[
||Z-Z'||_{C^{m}(V)} \leq \frac{c}{(\text{ degree }Z')^{k}} ||Z||_{C^{m+k}(V)}
\]
where $c$ is a constant depending on the dimension $n$, on the desired norm of approximation $m$, and on the tubular neighbourhood $V$ of $F(N)$. Observe that, by how the extension $Z$ was constructed, the $C^{m+k}$ norms of $Z$ can be bounded by those of $X$ in $N$, modulo a constant depending on the Lyapunov exponents of $X$ and on the derivatives of the embedding of $N$.

Thus we conclude
\[
\text{ degree } Z'\leq \bigg(\frac{C'}{\epsilon}||X||_{C^{m+k}(N)}\bigg)^{\frac{1}{k}}
\]
so that the parameter $d$ in $SO(d)\times \TT^d$ can be bounded by the dimension of the space of harmonic polynomials of degrees up to the degree $D$ of $Z'$
\[
d < \binom{D+n}{n}\frac{D(2Dn+n^2+1)}{n(n+1)(D+n)} \sim D^{n+1}
\]
and finally
\[
\text{ dim } M < \bigg(\frac{C'}{\epsilon} ||X||_{C^{m+k}(N)}\bigg)^{2\frac{n+1}{k}}\,.
\]

\section{Proof of Theorem \ref{chaos}}\label{s4}

The family of vector fields on $\TT^3$ defined as
\[
u_{ABC}(x_1, x_2, x_3) = (A \sin x_3 + C \cos x_2) \frac{\partial}{\partial x_1} + (B \sin x_1 + A \cos x_3) \frac{\partial}{\partial x_2} + 
\]
\[
+(C \sin x_2 + B \cos x_1) \frac{\partial}{\partial x_3}
\]
for parameters $A, B, C \in \RR$, are called the ABC (Arnold-Beltrami-Childress) flows. They have been extensively studied (see e.g \cite{AKh} and references therein), and are known to have chaotic invariant sets for certain values of the parameters $A, B, C$ \cite{Ch, Zi}. 

An ABC flow is given by sines and cosines with integer frequencies in the unit sphere. Thus, arguing as in the proof of Theorem \ref{main2} for the torus, we see that ABC vector fields are embedded in the Euler dynamics on $M=SO(6) \times \TT^{6}$ (there are seven integer points in the ball of radius $1$, but we can exclude the zero vector by virtue of Remark \ref{harmonics}). Theorem \ref{chaos} follows.

\section{Additional comments}\label{sf}

Here we give some additional constructions of interesting dynamics inside the Euler system which do not follow immediately from Theorems \ref{main} and \ref{main2}, but rather need the concert of other results:

\subsection{The Lorenz attractor in the Euler equations}

The Lorenz attractor is a paradigmatic example of attractor and a popular emblem of chaos. It arises in an ODE in $\RR^3$ that is obtained from a Galerkin truncation of the Boussinesq equation (itself a PDE approximating the Navier-Stokes equations).  

The goal of this subsection is to embed 3-dimensional geometric Lorenz flows (vector fields introduced in \cite{W} that have the same qualitative dynamics as the Lorenz system) into the Euler dynamical system on the manifold $SO(d)\times \TT^{d}$, so that the Euler equations reduce to the Lorenz dynamics in a finite dimensional subset of the phase space. In other words, in that manifold, the Lorenz dynamics are not a toy model of the Navier-Stokes equations, but an exact description of ideal fluid motion. 

Lorenz attractors are not stable under perturbation of the flow \cite{W}, which precludes the direct application of Theorem \ref{main}. 

Nevertheless, by a theorem of Guckenheimer and Williams (see the main Theorem in \cite{GW}), the set of vector fields in $\RR^3$ having a geometric Lorenz attractor contains an open set in the $C^{0}$ topology. Since geometric Lorenz attractors are contained in a bounded set, this fact carries over to any other $3$-dimensional compact manifold; in particular, to $\SS^3$. Thus there are polynomial vector fields in $\SS^3$ having a geometric Lorenz attractor, because they are dense for the $C^{0}$ topology. Applying Theorem \ref{main2} to one of these vector fields, we obtain a finite-dimensional family of solutions to the Euler equation on some Riemannian manifold $M$ that converge to a geometric Lorenz attractor. 

\subsection{The universal template}

There is a Riemannian manifold $M$, and a 3-dimensional family of divergence-free vector fields $\Sigma \subset \vect^{\infty}(M)$, with the following properties:
\begin{enumerate}
\item $\Sigma$ is diffeomorphic to a 3-sphere, and invariant under the Euler dynamical system.
\item The Euler dynamics on $\Sigma$ contains sets of periodic orbits representing every isotopy class of knots. 
\end{enumerate}

Indeed, there is a vector field in $\SS^{3}$ containing periodic orbits of every isotopy class of knots and links, which are moreover stable under sufficiently small $C^{m}$ perturbations (Corollary 3.2.19 in \cite{GS}). Applying Theorem \ref{main}, the claim follows.

\subsection{Euler trajectories and translation surfaces of triangular billiards}\label{sbilliards}

Let $P\subset \RR^2$ be a triangle. We will henceforth assume that $P$ comprises the interior and the sides of the triangle, but not the vertices. 

The billiard on $P$ is the dynamical system defined thus: a particle at point $q \in P$, with initial velocity $v$ of modulus $1$, moves with constant velocity while in the interior of $P$, and is reflected every time the trajectory hits a side of the triangle. If the trajectory hits a vertex, the particle stops. Any billiard trajectory is thus completely determined by the initial position $q \in P$ and the angle $\theta \in  \RR / 2 \pi \ZZ$ of the initial velocity.

By means of the unfolding construction of Katok-Zemljakov  \cite{KZ}, we can associate to any triangle $P$ an open, smooth Riemann surface $S_{P}$ with a flat metric, so that the geodesic flow on the unit tangent bundle of $S_{P}$ is equivalent to the billiard on $P$.  

The surface $S_{P}$ can be endowed with an atlas whose transition maps are euclidean translations, and so it is called the translation surface of $P$. In the coordinates given by this atlas, the geodesics on $S_{P}$ are straight lines, and their slope is globally well-defined, because the transition functions are translations. Thus for any given angle $\theta$, we can define a foliation on $S_{P}$ whose leaves are the geodesics of slope $\tan(\theta)$.

\begin{corollary}\label{billiards}

For any triangle $P$, there is a metric on $M=SO(30)\times \TT^{30}$ so that the Euler equation on $M$ has the following property: for any angle $\theta$, there is an compact surface $\Sigma_{P, \theta} \subset \vect^{\infty}(M)$, invariant under the Euler evolution, and such that
\begin{enumerate}
\item The surface $\Sigma_{P, \theta}$ minus a finite number of points is diffeomorphic to the translation surface $S_{P}$. These finite points in $\Sigma_{P, \theta}$ are stationary solutions of the Euler equation.
\item The one-dimensional foliation on $\Sigma_{P, \theta}$ whose leaves are given by the Euler trajectories is diffeomorphic to the foliation on $S_{P}$ given by geodesics of slope $\tan(\theta)$. 

\end{enumerate}

\end{corollary}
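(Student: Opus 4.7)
The plan is to apply Theorem \ref{main2} to a polynomial vector field $X_P$ on $\SS^3$, built from the triangle $P$ and of polynomial degree chosen so that the formula in Subsection \ref{dimension2} yields exactly $d=30$; this produces the ambient Euler manifold $M=SO(30)\times\TT^{30}$. Under the embedding $\Phi:\SS^3\hookrightarrow\vect^\infty(M)$ supplied by Theorem \ref{main2}, every compact invariant surface of $X_P$ in $\SS^3$ is sent to a compact invariant surface of the Euler flow on $M$, and zeros of $X_P$ are sent to stationary solutions.

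The heart of the proof is constructing a single $X_P$ whose orbit structure simultaneously realizes all direction-$\theta$ foliations on the translation surface $S_P$. First, I would invoke the Katok-Zemljakov unfolding to produce $S_P$ together with its one-parameter family $\cF_\theta$ of geodesic foliations of slope $\tan(\theta)$, and note that the various $\cF_\theta$ are related by the canonical $SO(2)$-rotation action on the tangent bundle of $S_P$. Second, I would build $X_P$ so that, for each $\theta$, it has an invariant compact 2-submanifold $\overline{\Sigma_\theta}\subset\SS^3$ whose smooth part $\Sigma_\theta$ is diffeomorphic to $S_P$ minus finitely many points (the cone singularities corresponding to vertex preimages), with $X_P|_{\Sigma_\theta}$ conjugate to $\cF_\theta$ and vanishing at the added points. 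For rational triangles, where $S_P$ is compact and is a branched cover of $\TT^2$, this can be implemented through the Hopf decomposition of $\SS^3$: each Hopf torus carries linear flows whose slopes parametrize $\theta$, and the branched cover realizing $S_P$ inside $\SS^3$ is encoded by polynomial relations among the ambient $\RR^4$-coordinates. Irrational triangles require additional care, since $S_P$ is then non-compact and an appropriate completion by the finitely many added points must be used.

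Given $X_P$, the embedding $\Phi$ from Theorem \ref{main2} produces the desired surfaces $\Sigma_{P,\theta}:=\Phi(\overline{\Sigma_\theta})\subset\vect^\infty(M)$: they are compact, invariant under the Euler flow, and the Euler dynamics on each is conjugate to $X_P|_{\overline{\Sigma_\theta}}$, hence, away from the finitely many added zeros, to $\cF_\theta$ on $S_P$; those zeros are carried by $\Phi$ to stationary Euler solutions. The main obstacle is the second stage of the construction: producing a polynomial vector field on $\SS^3$ whose invariant foliation captures the combinatorially unbounded topological complexity of translation surfaces (in particular, high genus arising from triangles with many-fold reflection groups), while keeping the polynomial degree uniformly bounded by $4$ regardless of $P$. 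This uniformity in the degree bound is precisely what guarantees that the same manifold $M=SO(30)\times\TT^{30}$ works for every triangle, and is the technical crux of the argument.
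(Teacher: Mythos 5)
Your overall framework matches the paper's: apply Theorem \ref{main2} to a polynomial vector field on $\SS^3$ that carries, for each $\theta$, an invariant surface realizing the slope-$\tan(\theta)$ geodesic foliation of $S_P$, and push everything forward by the embedding $\Phi$. But you have correctly located, and then left open, the one step that actually constitutes the proof: the existence of such a vector field with degree bounded independently of $P$. The paper closes this gap by invoking a specific external result, the dictionary of Valdez (\emph{Billiards in polygons and homogeneous foliations on $\CC^2$}, cited as \cite{V}): if $\lambda_1,\lambda_2,\lambda_3$ are the angles of $P$, the explicit quadratic homogeneous holomorphic vector field
\[
X=z_1\bigl(\lambda_3 z_2+\lambda_2(z_2-z_1)\bigr)\,\partial_{z_1}+z_2\bigl(\lambda_3 z_1+\lambda_1(z_1-z_2)\bigr)\,\partial_{z_2}
\]
generates, via $\Re(X)$ and $\Im(X)$, a two-dimensional homogeneous foliation of $\RR^4$ which, by Theorem 1.1 of \cite{V}, has for every $\theta$ a leaf diffeomorphic to $S_P$ carrying the direction-$\theta$ foliation as the integral curves of $\Re(X)$. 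Homogeneity lets one project to $\SS^3$ (subtracting the radial component), yielding a polynomial field of degree $4$ whose coefficients depend on $P$ only through the three angles; the uniform degree bound, and hence the fixed $d=30$ and $M=SO(30)\times\TT^{30}$, is automatic. This works for rational and irrational triangles alike.

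Your proposed substitute --- Hopf tori and polynomially-defined branched covers of $\TT^2$ for rational triangles --- is not carried out and is unlikely to succeed as stated: a rational triangle can produce a translation surface of arbitrarily high genus, and there is no indication that such a surface can be realized as an invariant surface of a degree-$\le 4$ polynomial field on $\SS^3$ by that route, let alone uniformly in $P$; and you explicitly leave the irrational case unresolved. Since the entire content of the corollary lives in this construction, the proposal as written has a genuine gap rather than an alternative proof. The remaining bookkeeping you do supply (zeros map to stationary solutions, compact invariant surfaces map to compact Euler-invariant surfaces, and the dimension count giving $d=30$ from spherical harmonics of degree up to $3$ on $\SS^3$) agrees with the paper.
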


\begin{proof}                                                                                                                                                                                                                                                                                                                                                            
They key ingredient in the proof is the dictionary between polygonal billiards and homogeneous foliations in $\CC^2$ due to F. Valdez \cite{V}. Let $\lambda_{1}, \lambda_2, \lambda_3$ be the angles of the triangle $P$. In $\CC^2$ we define the following homogeneous holomorphic vector field:
\[
X=z_1(\lambda_3 z_2+\lambda_2(z_2-z_1)) \partial_{z_1}+z_2(\lambda_3 z_1+\lambda_1(z_1-z_2))
\]
The integral curves of $\Re (X)$ and $\Im (X)$ generate a $2$-dimensional homogeneous foliation $\mathcal{F}$ in $\RR^4$.

By virtue of Theorem 1.1 in  \cite{V}, for any angle $\theta$ we can find a leaf $L$ of $\mathcal{F}$ that is diffeomorphic to $S_P$ through a diffeomorphism that maps the integral curves of $\Re(X)$ in $L$ to the leaves of the foliation in $S_{P}$ given by the geodesics of slope $\tan(\theta)$. 

As argued in Section 1.1 of \cite{V}, the homogeneity of the construction above allows us to deduce the analogous result in $\RP^{3}$ and $\SS^3$. In particular, the foliation in $\SS^3$ defined by the integral curves of the vector fields
\[
U=\Re(X)-(\Re(X)\cdot \partial_{r}) \partial_{r}
\]
\[
V=\Im(X)-(\Im(X)\cdot \partial_{r}) \partial_{r}
\]
(where $\partial_r$ denotes the radial unit vector field) has a leaf which, minus the zeros of $U$, is diffeomorphic to $S_P$, and on which $U$ defines a foliation diffeomorphic to the geodesic foliation of any given slope.

The vector fields $U$ and $V$ in $\SS^3$ are the restrictions of polynomial vector fields of degree $4$ on $\RR^4$. Arguing as in the proof of Theorem \ref{main2} in the case of the $\SS^3$, we see that we can embed the vector field in $SO(30)\times \TT^{30}$ ($30$ being the dimension of the linear space of spherical harmonics in $\SS^3$ of degree up to $3$). Corollary \ref{billiards} follows. 

\end{proof}

\begin{remark}
Corollary \ref{billiards} can be generalized to more general polygonal billiards using the construction in Section 5 of \cite{V}. 
\end{remark}

\section{Acknowledgements}

This work owes a great deal to Daniel Peralta-Salas, who shared with the author many crucial insights. The author also wants to thank Theodore Drivas, Boris Khesin and \'Angel David Mart\'inez for useful conversations and for helping to improve the manuscript with their suggestions. Finally, we acknowledge the excellent working conditions and financial support provided by the Max Planck Institute for Mathematics, the University of Toronto, and the Fields Institute.

\end{document}